\newenvironment{proof}[1][]{\noindent {\bf Proof #1:\;}}{\hfill $\Box$}
\newtheorem{proposition}{Proposition}
\newtheorem{corollary}{Corollary}
\newtheorem{definition}{Definition}
\newtheorem{assumption}{Assumption}
\newcommand{\RR}{\mathbb{R}}
\newcommand{\jb}{\color{red}}
\renewcommand{\jb}{}
\newcommand{\valFun}{v}
\newcommand{\traj}{\rm traj}
\begin{document}

\author{Edouard Pauwels$^{1}$, Didier Henrion$^{2,3}$, Jean-Bernard Lasserre$^{2}$}
\title{\bf Positivity certificates in optimal control}
\footnotetext[1]{IRIT, Universit\'e de Toulouse, Toulouse, France.}
\footnotetext[2]{LAAS-CNRS, Universit\'{e} de Toulouse, CNRS, Toulouse, France.}
\footnotetext[3]{Faculty of Electrical Engineering, Czech Technical University in Prague, Prague, Czech Republic.}
\date{Draft of \today}

\maketitle

\begin{abstract}
	We propose a tutorial on relaxations and weak formulations of optimal control with their semidefinite approximations. We present this approach solely through the prism of positivity certificates which we consider to be the most accessible for a broad audience, in particular in the engineering and robotics communities. This simple concept allows to express very concisely powerful approximation certificates in control. The relevance of this technique is illustrated on three applications: region of attraction approximation, direct optimal control and inverse optimal control, for which it constitutes a common denominator. In a first step, we highlight the core mechanisms underpinning the application of positivity in control and how they appear in the different control applications. This relies on simple mathematical concepts and gives a unified treatment of the applications considered. This presentation is based on the combination and simplification of published materials. In a second step, we describe briefly relations with broader literature, in particular, occupation measures and Hamilton-Jacobi-Bellman equation which are important elements of the global picture. We describe the Sum-Of-Squares (SOS) semidefinite hierarchy in the semialgebraic case and briefly mention its convergence properties. Numerical experiments on a classical example in robotics, namely the nonholonomic vehicle, illustrate the concepts presented in the text for the three applications considered.

\end{abstract}


\section{Introduction}
\subsection{Context}
In the context of understanding and reproducing human movements and more generally in motion control, there has recently been a growing interest in using optimal control to model and account for the complexity of underlying processes \cite{arechavaleta2008optimality,chittaro2013inverse,friston2011what,laumond2014optimality,majumdar2014convex,mombaur2010human,puydupin2012convex,todorov2004optimality}. The question of the validity of this approach is still open and the interface between optimal control and human locomotion {\jb is an active field of research}.

The {\jb so-called  weak formulation} of optimal control problem {\jb has} a long history in the control community \cite{vinter1978equivalence,hernandez1996linear,friston2011what}, see also \cite[Part III]{fattorini1999inifinite} for a detailed historical perspective. This approach comes with a rich convex duality structure \cite{vinter1993convex}, one side of which involves functional non negativity constraints. This type of constraints constitutes the focus of this paper.

{\jb In general,} functional positivity constraints are not tractable computationally. Advances {\jb in semialgebraic geometry 
on the representation of positive polynomials 
\cite{putinar1993positive}} have allowed to construct provably {\jb {\it convergent hierarchies of sums-of-squares approximations}} to this kind of intractable constraints when the problem at hand only involves polynomials \cite{lasserre2001global,lasserre2010}. 
{\jb Based on semidefinite programming \cite{vandenberghe1996semidefinite}, these approximations provide
a new perspective on infinite dimensional linear programs and functional non negativity constraints, along with 
tractable numerical approximations}. Application of these hierarchies {\jb in control} lead to the design {\jb of new methods to address} control problems with global-optimality guaranties \cite{lasserre2008nonlinear,parrilo2003semidefinite,henrion2004solving,henrion2005positive,chesi2010lmi}.
\subsection{Content}
This paper is a tutorial which focuses on {\jb the application} of infinite dimensional conic programming approach to control problems. This constitutes a very relevant tool for the human locomotion and humanoid robotics research communities. Indeed the {\jb sums-of-squares (SOS) hierarchy} 
{\jb provides a systematic numerical scheme to solve} related practical control problems. {\jb We will illustrate the power of this approach by focusing on three such particular problems, namely:
	\begin{itemize}
		\item region of attraction approximation.
		\item direct optimal control.
		\item inverse optimal control.
	\end{itemize}
}

The infinite dimensional linear programming approach {\jb combined with its associated SOS hierarchy of approximations} has been applied to these problems in \cite{lasserre2008nonlinear,henrion2014convex,pauwels2014linear,pauwels2014inverse}. All in all, the contents of this paper is not new {\jb and is merely based} on existing materials from the control and {\jb sums-of-squares approximations} literature. The purpose of the paper is to {\jb reveal and highlight a few simple mechanisms and ideas that constitute a common denominator of all these applications.}

{\jb Being concerned with accessibility to a broad audience, we deliberately hide important aspects of the approach.} In particular, we focus on functional positivity constraints {\jb (one side of a coin in this approach)} because we think that this is {\jb the most accessible way to present a general intuition regarding the weak formulation of optimal control problems}. {\jb Another reason is that this simple notion of positivity allows to provide very strong sub-optimality certificate stemming from elementary mathematics}. Other facets of the same problem {\jb (the other side of the coin described in the dual of the infinite-dimensional linear program), including conic duality and details about the weak formulation of control problems
on {\it occupation measures}, are only briefly mentioned in a second step with very few details}. Indeed, this material is often {\jb perceived} as more technical and less accessible from a mathematical point of view. {\jb Although we do not emphasize much the moment relaxation approximation and its relation with occupation measures, it would provide a more complete picture to speak about the {\it moment-SOS hierarchy} (instead of the SOS-hierarchy) because each semidefinite program of the 
SOS-hierarchy of approximations of functional positivity constraints has a dual (also a semidefinite program) which deals with ``moments" of
occupation measures. We mention this point only briefly and invite the reader interested in more details about these aspects to consult the existing literature}.

\subsection{Organization of the paper}
The optimal control problem and its value function are introduced in Section \ref{sec:optimalControl}. In Section \ref{sec:bounds}, we introduce functional positivity constraints which involve surrogate value functions. We discuss implications of these types of constraints in the context of optimal control, {\jb and in particular we describe how they relate to the approximation of the value function}.  This constitutes a general and flexible {\jb core} result that is useful in the control applications that we consider. Section \ref{sec:applications} {\jb 
illustrates the concept in several control problems dealing with (i) the approximation of region of attraction, (ii) optimal control and (iii), inverse optimal control.} {\jb Finally,} Section \ref{sec:convergence} discusses connections with {\jb the optimal control literature} and additional aspects of the approach that we do not {\jb describe explicitly}. We also briefly describe how the {\jb sums-of-squares} hierarchy of approximations can be {\jb implemented} and discuss convergence issues.
\section{Optimal control and value function} 
\label{sec:optimalControl}
\subsection{Notations and preliminaries}
If $A$ is a subset of $\RR^n$, $\mathcal{C}(A)$ {\jb denotes the space} of continuous functions from $A$ to $\RR$ 
{\jb while $\mathcal{C}^1(A)$ denotes the space of continuously differentiable functions from $A$ to $\RR$}. 
Let $X \subseteq \RR^{d_X}$ {\jb 
and $U \subseteq \RR^{d_U}$ denote respectively the state and control spaces, both supposed to be compact}. {\jb The system dynamics is} given by a continuously differentiable vector field $f \in \mathcal{C}^1(X \times U)^{d_X}$. Terminal state constraints are represented by a {\jb given compact set $X_T \subseteq X$.}

{\jb Given all the above ingredients one may define {\it admissible trajectories}} in the context of optimal control. We will use the following definition.
\begin{definition}[Admissible trajectories]\label{def:admissibleTraj}
	Consider an initial time $t_0 \in [0,1]$ and a pair of functions $(x, u)$ from $[t_0,1]$ to $\RR^{d_X}$ and $\RR^{d_U}$ respectively. This pair constitutes an admissible trajectory if it has the following properties:
	\begin{itemize}
		\item $u$ is a measurable function from $[t_0,1]$ to $U$. 
		\item For any $t \in [t_0,1]$, $x(t) = x_0+\int_{t_0}^t f(x(s),u(s))ds$.
		\item $x(1) \in X_T$.
	\end{itemize}
	Given $x_0 \in X$, {\jb denote} by $\traj_{t_0, x_0}$ the set of such admissible trajectories starting at time $t_0$ with $x(t_0) = x_0$. Note that the second property implies that $x$ is differentiable almost everywhere as a function of $t$, with $\dot{x}(t) = f(x(t), u(t))$ for almost all $t \in [t_0, 1]$.
\end{definition}
The class of admissible trajectories constitutes the decision variables of an optimal control problem.
\subsection{Optimal control and value function}
An optimal control problem consists of minimizing a functional over the set of admissible trajectories. The functional has a specific integral form involving a continuous {\jb Lagrangian $l \in \mathcal{C}(X \times U)$ and a continuous terminal cost $l_T \in \mathcal{C}(X_T)$}. Given an initial time $t_0 \in [0,1]$ and a starting point $x_0 \in X$, {\jb consider the infimum value:
\begin{equation}\label{eq:optimalControl}
	\tag{OCP}
	\begin{array}{llll}
		v^*(t_0,x_0) & := & \inf & \displaystyle \int_{t_0}^1 l(x(t),u(t))dt + l_T(x(1)) \\
		&&\mathrm{s.t.} & {(x,u) \in \traj_{t_0, x_0}}
	\end{array}
\end{equation}
of the functional over all admissible trajectories. It is a well defined value that only depends on $t_0$ and $x_0$
and $v^*:[0,1]\times X\to \mathbb{R}\cup\{+\infty\}$  is called the {\it value function} associated with the optimal control problem.}

Note that the constraints in (\ref{eq:optimalControl}) ensure that we {\jb only consider} admissible trajectories starting from $x_0$ at $t_0$,
{\jb and therefore if $\traj_{t_0, x_0}$ is empty then $v^*(t_0, x_0) = +\infty$}.
\section{Bounds on the value function}
\label{sec:bounds}
The value function introduced in (\ref{eq:optimalControl}) can be a very complicated object. The existence of minimizing sequences, the question of the infimum being attained and the regularity of $v^*$ {\jb are all quite delicate issues}. In this section we show that functional positivity constraints that are expressible in {\jb a simple form} lead to powerful approximation results. 
{\jb In addition, and remarkably, a striking feature of these results is that their proof arguments are elementary}. We {\jb now focus} on the description of these constraints {\jb while} their origin and connection with control theory are {\jb postponed to} Section \ref{sec:convergence}.
\subsection{Global lower bounds}
We let {\jb ``$\cdot$"} denote the dot product between two vectors of the same size. For a given function $\valFun \in \mathcal{C}^1( [0,1] \times X)$, {\jb consider} the following positivity conditions:
\begin{align}\label{eq:globalLower}
	l(x,u) + \frac{\partial \valFun}{\partial t}(x,t) + \frac{\partial \valFun}{\partial x}(x,t) \cdot f(x,u) \geq 0 &\quad\; \forall (x, u, t) \in X \times U \times [0,1]\\
	l_T(x) - \valFun(T,x) \geq 0 &\quad\; \forall x\in X_T.\nonumber
\end{align}
Note that these conditions are indeed functional positivity constraints since both of them must hold for all arguments in certain sets. 
{\jb How to ensure or approximate such conditions in practical situations is discussed in Section \ref{sec:sosReinforcement}}. We focus for the moment on the consequences of condition (\ref{eq:globalLower}) in terms of control, the following proposition being an elementary, yet powerful example.
\begin{proposition}[Global lower bound on the value function]\label{prop:globalLower}
{\jb If $\valFun \in \mathcal{C}^1([0, 1] \times X)$ satisfies condition (\ref{eq:globalLower}) then $\valFun(t_0, x_0) \leq v^*(t_0,x_0)$ for any $x_0 \in X$ and $t_0 \in [0,1]$.}
\end{proposition}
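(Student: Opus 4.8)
The plan is to exploit the fundamental theorem of calculus along an arbitrary admissible trajectory, converting the pointwise positivity in (\ref{eq:globalLower}) into an integral inequality whose right-hand side is exactly the cost functional of (\ref{eq:optimalControl}). First I would dispose of the trivial case: if $\traj_{t_0,x_0}$ is empty then $v^*(t_0,x_0)=+\infty$ by the convention recorded after (\ref{eq:optimalControl}), and since $\valFun$ is real-valued the desired inequality holds automatically. I may therefore assume $\traj_{t_0,x_0}\neq\emptyset$ and fix an arbitrary admissible pair $(x,u)\in\traj_{t_0,x_0}$, aiming to show that its cost is at least $\valFun(t_0,x_0)$.

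The key step is to study the scalar function $t\mapsto \valFun(t,x(t))$ along this trajectory. Since $x$ is absolutely continuous, being the integral of the bounded measurable map $s\mapsto f(x(s),u(s))$, and $\valFun\in\mathcal{C}^1([0,1]\times X)$, I would argue that the composition is absolutely continuous and that the chain rule gives, for almost every $t$,
\[
\frac{d}{dt}\,\valFun(t,x(t)) = \frac{\partial \valFun}{\partial t}(t,x(t)) + \frac{\partial \valFun}{\partial x}(t,x(t))\cdot f(x(t),u(t)),
\]
using $\dot x(t)=f(x(t),u(t))$ from Definition \ref{def:admissibleTraj}. Integrating from $t_0$ to $1$ and invoking the first inequality of (\ref{eq:globalLower}) at the admissible point $(x(t),u(t),t)$, which bounds the integrand below by $-l(x(t),u(t))$, would yield
\[
\valFun(1,x(1)) - \valFun(t_0,x_0) \;\ge\; -\int_{t_0}^1 l(x(t),u(t))\,dt.
\]

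Rearranging and then applying the terminal inequality of (\ref{eq:globalLower}) at $x(1)$, which is licit because admissibility forces $x(1)\in X_T$ and hence $\valFun(1,x(1))\le l_T(x(1))$, would give
\[
\valFun(t_0,x_0) \;\le\; \int_{t_0}^1 l(x(t),u(t))\,dt + l_T(x(1)).
\]
The right-hand side is precisely the value of the functional in (\ref{eq:optimalControl}) for this trajectory. Since $(x,u)$ was an arbitrary element of $\traj_{t_0,x_0}$, taking the infimum over all admissible trajectories would deliver $\valFun(t_0,x_0)\le v^*(t_0,x_0)$, as claimed.

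I expect the only genuinely delicate step to be the regularity justification underlying the chain rule: because $u$ is merely measurable and $x$ only absolutely continuous, so that $\dot x$ exists only almost everywhere, one must check that $t\mapsto \valFun(t,x(t))$ is absolutely continuous and that the chain rule and the fundamental theorem of calculus remain valid in this almost-everywhere setting. This rests on the $\mathcal{C}^1$ regularity of $\valFun$ together with the compactness of $X$ and $U$, which make $f$, $l$ and the partial derivatives of $\valFun$ bounded on the relevant sets so that all integrands are measurable and integrable. Everything else is elementary rearrangement.
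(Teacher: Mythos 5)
Your proposal is correct and follows essentially the same route as the paper's proof: handle the empty-trajectory case via the $+\infty$ convention and boundedness of $\valFun$, differentiate $t\mapsto \valFun(t,x(t))$ along an arbitrary admissible trajectory, integrate the first inequality of (\ref{eq:globalLower}), apply the terminal inequality at $x(1)\in X_T$, and take the infimum. The only difference is that you make explicit the absolute-continuity and almost-everywhere chain-rule justification, which the paper leaves implicit; this is a welcome refinement rather than a departure.
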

\begin{proof}
	Fix $x_0 \in X$ and $t_0$ and consider the set $\traj_{t_0, x_0}$ of admissible trajectories starting at $x_0$ at time $t_0$ as described in Definition \ref{def:admissibleTraj}. If this set is empty {\jb then $v^*(t_0,x_0) = +\infty$}. Since $\valFun$ is continuous on a compact set, it is bounded and hence finite at $(t_0, x_0)$ which ensures that $\valFun(x_0,t_0) \leq v^*(x_0,t_0)$. If $\traj_{t_0, x_0}$ is {\jb not empty, consider} an arbitrary but fixed admissible trajectory $(x, u)\colon [t_0, 1] \to X \times U$ which satisfies all the requirements of Definition \ref{def:admissibleTraj} with $x(t_0) = x_0$. Combining admissibility with the first condition in (\ref{eq:globalLower}) {\jb yields:}
	
	\begin{align*}
		l(x(t),u(t)) + \frac{\partial}{\partial t}\left[ \valFun(t, x(t)) \right]
		&=l(x(t),u(t)) + \frac{\partial \valFun}{\partial t}(t, x(t)) + \frac{\partial \valFun}{\partial x}(t, x(t)) \cdot \dot{x}(t)\\
		&=l(x(t),u(t)) + \frac{\partial \valFun}{\partial t}(t, x(t)) + \frac{\partial \valFun}{\partial x}(t, x(t)) \cdot f(x(t),u(t))\\ 
		&\geq 0,\quad{\jb \text{ for almost all } t\in [t_0,1].}
	\end{align*}
	{\jb Integrating between $t_0$ and $1$, and using} non negativity of the first term, we obtain
	\begin{align*}
		{\jb \int_{t_0}^1 l(x(t),u(t))\,dt} + \valFun(1, x(1)) - \valFun(t_0, x_0) \geq 0.
	\end{align*}
	{\jb Combining with the second condition in (\ref{eq:globalLower}) yields}
	\begin{align*}
		\valFun(t_0, x_0) \leq {\jb \int_{t_0}^1 l(x(t),u(t))\,dt} + l_T(x(1)).
	\end{align*}
	Since $(x, t)$ was arbitrary among all admissible trajectories, {\jb this inequality is still valid
	if we take the infimum in the right hand side, which coincides with the definition of $v^*$ in (\ref{eq:optimalControl}), and 
	the proof is complete.}
\end{proof}

Proposition \ref{prop:globalLower} provides a sufficient condition to obtain global lower bounds on the value function $v^*$. A remarkable property of this condition is that it {\jb {\it does not depend explicitly on $v^*$}}. In particular, condition (\ref{eq:globalLower}) does not {\jb depend explicitly} on regularity properties of $v^*$ or {\jb on} the existence of {\jb optimal} trajectories in (\ref{eq:optimalControl}). Furthermore, they are expressed in a relatively compact form and the proof arguments are elementary.

\subsection{Local upper bounds}
We now turn to upper bounds on the value function $v^*$ of problem (\ref{eq:optimalControl}). First, {\jb observe}
that if the set of admissible trajectories is empty in (\ref{eq:optimalControl}) {\jb then $v^*(t_0,x_0)=+\infty$}. 
{\jb Hence upper bounding $v^*$ using a continuous function only makes sense} when the set of admissible trajectories is not empty. Therefore such upper bounds {\jb depend} on admissible trajectories and only hold in a certain {\jb ``local sense"}. 
In particular{\jb ,  global upper bounds do not exist in general, whence the local characteristic for the type of bounds derived in this section}. We introduce the following notation 
\begin{definition}[Domain of the value function]\label{def:suppValFun}
{\jb Denote by $V \subset [0,1] \times X$ the domain of $v^*$, that is,} the subset of $[0,1]\times X$ on which $v^*$ takes finite values,
\begin{align*}
	V := \left\lbrace(t_0,x_0) \in [0,1] \times X: \:\traj_{t_0,x_0} \neq \emptyset \right\rbrace
\end{align*}
\end{definition}
{\jb Consider} a fixed pair $(t_0, x_0) \in V$ and a fixed admissible trajectory {\jb $(x,u) \in \traj_{t_0,x_0}$, starting at $x_0$ at time $t_0$.} 
For a given $\epsilon \geq 0$, the following conditions are a counterpart to the positivity condition in (\ref{eq:globalLower}).
\begin{align}\label{eq:localUpper}
	l(x(t),u(t)) + \frac{\partial \valFun}{\partial t}(t, x(t)) + \frac{\partial \valFun}{\partial x}(t, x(t)) \cdot f(x(t),u(t)) &\leq \frac{\epsilon}{2}, \; \text{ for almost all } t \in [t_0, 1]\\ 
	l_T(x(T)) - \valFun(1,x(1)) &\leq \frac{\epsilon}{2}\nonumber.
\end{align}

{\jb They can be used to obtain the following upper approximation result}.
\begin{proposition}[Local upper bound on the value function]\label{prop:localUpper}
	Let $(t_0, x_0) \in V$ be fixed. Let $(x,u) \in \traj_{t_0,x_0}$ be an admissible trajectory starting at $x_0$ at time $t_0$. Assume that $\valFun \in \mathcal{C}^1([0, 1] \times X)$ satisfies condition (\ref{eq:localUpper}) for a given $\epsilon >0$. {\jb Then 
	$v^*(t,x(t)) \leq \valFun(t, x(t)) + \epsilon$ for all $t \in [t_0, 1]$. In addition, if $\valFun$ satisfies condition (\ref{eq:globalLower}) then $(x,u)$ is at most $\epsilon$ sub-optimal for problem (\ref{eq:optimalControl}): feasible with objective value at most $\epsilon$ greater than the optimal value.}
\end{proposition}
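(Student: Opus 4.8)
The plan is to mirror the integration argument used for Proposition \ref{prop:globalLower}, but to run it on the tail of the trajectory and to track the two halves of the $\epsilon$-slack separately. The crucial preliminary observation is a restriction (dynamic-programming) property: for any fixed $t \in [t_0, 1]$, the restriction of $(x,u)$ to $[t,1]$ is itself an admissible trajectory starting at $x(t)$ at time $t$. Indeed, measurability of $u$ is preserved under restriction; subtracting the integral identity $x(t) = x_0 + \int_{t_0}^t f(x(s),u(s))\,ds$ from the corresponding one for $x(s)$ yields $x(s) = x(t) + \int_t^s f(x(\tau),u(\tau))\,d\tau$ for all $s \in [t,1]$; and the terminal condition $x(1) \in X_T$ is untouched. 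Consequently $(t,x(t)) \in V$ and, since $v^*(t,x(t))$ is an infimum over admissible trajectories, it is bounded above by the cost of this particular tail, namely $v^*(t,x(t)) \leq \int_t^1 l(x(s),u(s))\,ds + l_T(x(1))$.

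First I would rewrite the first inequality in (\ref{eq:localUpper}) using the chain rule exactly as in the proof of Proposition \ref{prop:globalLower}, so that its left-hand side becomes $l(x(s),u(s)) + \frac{d}{ds}\left[\valFun(s,x(s))\right]$, bounded above by $\epsilon/2$ for almost all $s \in [t,1]$. Integrating this from $t$ to $1$ and using $(1-t) \leq 1$ gives $\int_t^1 l(x(s),u(s))\,ds + \valFun(1,x(1)) - \valFun(t,x(t)) \leq \epsilon/2$. The second inequality in (\ref{eq:localUpper}) supplies $l_T(x(1)) \leq \valFun(1,x(1)) + \epsilon/2$. Adding these and cancelling $\valFun(1,x(1))$ yields $\int_t^1 l(x(s),u(s))\,ds + l_T(x(1)) \leq \valFun(t,x(t)) + \epsilon$, which combined with the upper bound on $v^*(t,x(t))$ from the restriction property gives the first claim $v^*(t,x(t)) \leq \valFun(t,x(t)) + \epsilon$.

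For the second statement, I would specialize the first claim to $t = t_0$, so that the objective value of $(x,u)$, namely $\int_{t_0}^1 l(x(s),u(s))\,ds + l_T(x(1))$, is at most $\valFun(t_0,x_0) + \epsilon$. If in addition $\valFun$ satisfies (\ref{eq:globalLower}), Proposition \ref{prop:globalLower} gives $\valFun(t_0,x_0) \leq v^*(t_0,x_0)$, so the objective value of $(x,u)$ is at most $v^*(t_0,x_0) + \epsilon$; since $(x,u)$ is admissible its value is also at least $v^*(t_0,x_0)$, establishing $\epsilon$-suboptimality.

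I anticipate that no step constitutes a genuine obstacle, the arguments being elementary as the text advertises. The only points requiring care are the bookkeeping of the two $\epsilon/2$ contributions (the integral over $[t,1]$ contributing $\tfrac{\epsilon}{2}(1-t) \leq \tfrac{\epsilon}{2}$ and the terminal condition the other $\tfrac{\epsilon}{2}$) and the explicit verification that the tail of an admissible trajectory remains admissible, which is precisely what allows the bound to hold for every $t$ rather than only at $t_0$.
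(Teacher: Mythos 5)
Your proof is correct and follows essentially the same route as the paper: the same chain-rule rewriting and integration over $[t,1]$, the same splitting of the $\epsilon$-slack between the running and terminal conditions, and the same appeal to Proposition \ref{prop:globalLower} for the suboptimality claim. The only difference is that you make explicit the tail-admissibility (dynamic programming) observation justifying $v^*(t,x(t)) \leq \int_t^1 l(x(s),u(s))\,ds + l_T(x(1))$, which the paper uses implicitly when asserting that the left-hand side upper bounds $v^*(t,x(t))$; this is a welcome clarification rather than a departure.
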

\begin{proof}
	Following similar integration arguments as in the proof of Proposition \ref{prop:globalLower}, using the first part of condition (\ref{eq:localUpper}) {\jb yields:}
	\begin{align*}
		{\jb \int_{t}^1 l(x(s),u(s))\,ds} + \valFun(1, x(1)) - \valFun(t, x(t)) \leq (1 - t) \frac{\epsilon}{2} \leq \frac{\epsilon}{2},\quad{\jb \forall\,t\in [t_0, 1],}
	\end{align*}
	{\jb and combining with the second part of condition (\ref{eq:localUpper}),} 
	\begin{align*}
		{\jb \int_{t}^1 l(x(s),u(s))\,ds}+  l_T(x(1)) \leq l_T(x(1)) - \valFun(1, x(1)) + \valFun(t, x(t)) + \frac{\epsilon}{2}  \leq \valFun(t, x(t)) + \epsilon,
	\end{align*}
{\jb for all $t\in [t_0, 1]$.}
{\jb As the} left hand side is an upper bound on $v^*(t, x(t))$, {\jb the first statement follows}. {\jb In addition, if} condition (\ref{eq:globalLower}) is satisfied {\jb then we can use Proposition \ref{prop:globalLower} at $(t, x(t))$ to obtain:}
	\begin{align*}
		{\jb \int_{t}^1 l(x(s),u(s))\,ds} +  l_T(x(1)) \leq v^*(t, x(t)) + \epsilon.
	\end{align*}
	In particular, {\jb letting $t=t_0$ in the previous relation yields that $(x,u)$ is at most $\epsilon$-sub-optimal for problem (\ref{eq:optimalControl}).}
\end{proof}

{\jb Again,} a remarkable property of condition (\ref{eq:localUpper}) is that it depends neither on the regularity of $v^*$ nor on the existence of optimal trajectories and still provides powerful {\jb sub-optimality} certificates. Note that {\jb 
Proposition \ref{prop:localUpper} characterizes properties of $v^*$ only along} the specific chosen trajectory, {\jb whence} the name ``local'' for {\jb this type} of bounds.

\section{Applications in control}
\label{sec:applications}
In this section, we consider applications in control and show how conditions (\ref{eq:globalLower}) and (\ref{eq:localUpper}) can be used to solve control  problems. 

\begin{center}
{\jb {\it The general methodology is to use conditions (\ref{eq:globalLower}) and (\ref{eq:localUpper}) as constraints in combination with additional constraints and linear objective functions depending on the application.}}
\end{center} 

The reason why this is relevant and produces valid practical methods comes from the connection with Propositions \ref{prop:globalLower} and \ref{prop:localUpper}. Depending on the problem at hand, definition of objective functions or addition of constraints {\jb allow to 
provide a systematic numerical scheme to solve the control problems we consider: approximating the region of attraction of a controlled system, solving optimal control and inverse optimal control problems, provided that they are described with polynomials and semi-algebraic sets} (see also Section \ref{sec:sosReinforcement}). All the material of this section is based on reformulation and simplification of the work presented in \cite{lasserre2008nonlinear,henrion2014convex,pauwels2014inverse,pauwels2014linear}.

\subsection{Region of attraction}
The region of attraction is a subset of the domain of the value function, $V$ in Definition \ref{def:suppValFun}, corresponding to a fixed initial time $t_0$. In other words, we are looking for the set $X_0$ of initial conditions, {\jb $x_0$}, for which there exists an admissible trajectory starting {\jb in state $x_0$} at a given time $t_0$.
\begin{definition}[Region of attraction]
	The region of attraction at time $t_0$, denoted by $X_0 \subset{X}$, is the set that satisfies
	\begin{align*}
		X_0 = \left\{ x_0 \in X: \traj_{t_0,x_0} \neq \emptyset \right\},
	\end{align*}
	where $\traj_{t_0, x_0}$ is the set of admissible trajectories as given in Definition \ref{def:admissibleTraj}. Following Definition \ref{def:suppValFun}, we have $\left\{ t_0 \right\} \times X_0 = V \cap \left\{ t_0 \right\} \times X$.
\end{definition}

This exactly corresponds to the situation where $l = 0$ and $l_T=0$ in (\ref{eq:optimalControl}). Indeed, in this case, $v^*$ becomes the indicator of $X_0$ (equal to $0$ on $X_0$ and $+\infty$ otherwise) and the optimal control problem is a feasibility problem.

Condition (\ref{eq:globalLower}) becomes
\begin{align}\label{eq:ROA}	
	\frac{\partial \valFun}{\partial t}(x,t) + \frac{\partial \valFun}{\partial x}(x,t) \cdot f(x,u) \geq 0 &\quad\; \forall (x, u, t) \in X \times U \times [0,1]\\
	\valFun(T,x) \leq 0 &\quad\; \forall x\in X_T\nonumber
\end{align}
and Proposition \ref{prop:globalLower} has the following consequence.
\begin{corollary}\label{cor:ROA}
	{\jb If  $\valFun \in \mathcal{C}^1([0, 1] \times X)$ satisfies condition (\ref{eq:ROA}) then $\valFun(x_0,t_0) \leq 0$ for any $x_0 \in X_0$.}
\end{corollary}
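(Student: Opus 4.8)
The plan is to recognize that Corollary \ref{cor:ROA} is an immediate specialization of Proposition \ref{prop:globalLower} to the region-of-attraction setting, where the Lagrangian and terminal cost both vanish. First I would observe that, upon substituting $l \equiv 0$ and $l_T \equiv 0$, condition (\ref{eq:ROA}) coincides exactly with condition (\ref{eq:globalLower}): the first inequality is unchanged once the $l(x,u)$ term is dropped, and the second inequality $l_T(x) - \valFun(T,x) \geq 0$ reduces to $-\valFun(T,x) \geq 0$, that is $\valFun(T,x) \leq 0$.

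Having established this correspondence, I would apply Proposition \ref{prop:globalLower} verbatim to conclude that $\valFun(t_0,x_0) \leq v^*(t_0,x_0)$ for every $x_0 \in X$ and $t_0 \in [0,1]$. The only remaining point is to evaluate $v^*$ in this degenerate cost setting. As already noted in the text, when $l = 0$ and $l_T = 0$ the objective in (\ref{eq:optimalControl}) is identically zero, so $v^*$ is the indicator of the region of attraction: it equals $0$ whenever an admissible trajectory exists, that is when $\traj_{t_0,x_0} \neq \emptyset$, and $+\infty$ otherwise. For $x_0 \in X_0$ we have precisely $\traj_{t_0,x_0} \neq \emptyset$ by definition, so $v^*(t_0,x_0) = 0$. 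Combining the two facts yields $\valFun(x_0,t_0) \leq v^*(t_0,x_0) = 0$, which is the claimed inequality.

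There is essentially no obstacle here: the entire content is the bookkeeping observation that (\ref{eq:ROA}) is the special case $l = l_T = 0$ of (\ref{eq:globalLower}), after which Proposition \ref{prop:globalLower} does all the work and the identification $v^* \equiv 0$ on $X_0$ closes the argument. If one preferred a self-contained derivation, I could instead fix $x_0 \in X_0$, select any admissible trajectory $(x,u) \in \traj_{t_0,x_0}$, integrate the first inequality of (\ref{eq:ROA}) along it over $[t_0,1]$ to obtain $\valFun(1,x(1)) - \valFun(t_0,x_0) \geq 0$, and then invoke the terminal inequality $\valFun(1,x(1)) \leq 0$ (valid since $x(1) \in X_T$) to conclude $\valFun(t_0,x_0) \leq \valFun(1,x(1)) \leq 0$. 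This merely re-runs the proof of Proposition \ref{prop:globalLower} with zero cost, so the cleanest route is the direct reduction.
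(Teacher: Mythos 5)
Your proposal matches the paper's (implicit) argument exactly: the paper presents the corollary as an immediate consequence of Proposition \ref{prop:globalLower} after observing that condition (\ref{eq:ROA}) is condition (\ref{eq:globalLower}) with $l=0$ and $l_T=0$, and that $v^*$ is then the indicator of $X_0$, equal to $0$ on $X_0$. Your reduction is correct and the optional self-contained integration argument is a valid rerun of the same proof.
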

Corollary \ref{cor:ROA} {\jb states} that $X_0$ is contained in the zero sublevel set of $\valFun$ whenever $\valFun$ satisfies condition (\ref{eq:ROA}). {\jb However this is} not sufficient to have a good approximation of $X_0$ and condition (\ref{eq:ROA}) is not strong enough to distinguish between accurate and rough sublevel set approximations of this type. In order to sort out accurate candidates $\valFun$, a possibility is to search among all functions which satisfy condition (\ref{eq:ROA}) {\jb an ``optimal" one, e.g., in the sense that it} should be as much as possible greater than 0 outside of $X_0$. Following \cite{henrion2014convex}, we introduce an additional decision variable $w \in \mathcal{C}(X)$. We will construct an optimization problem which ensures that $w$ is non positive on $X_0$ and as close as possible to $1$ on $X \setminus X_0$. This can be obtained by combining Corollary \ref{cor:ROA} with additional positivity constraints and a linear objective function. The following problem is a reformulation of problem (16) in \cite{henrion2014convex}.
\begin{equation}
	\label{eq:ROAopt}
	\begin{array}{ll}
		\displaystyle\sup_{\valFun,w} &\int_X w(x) dx \\
		\mathrm{s.t.}&0\,\leq\,\frac{\partial \valFun}{\partial t} + \frac{\partial \valFun}{\partial x} \cdot f\\
		&0 \,\leq \,- \valFun(T,\cdot)\\
		& w(\cdot) \leq v(\cdot, t_0)\\
		& w \leq 1.
	\end{array}
\end{equation}
In problem (\ref{eq:ROAopt}), the first to constraints are exactly condition (\ref{eq:ROA}) and corollary \ref{cor:ROA} ensures that $\valFun(\cdot,t_0) \leq 0$ on $X_0$. Therefore, the third constraint ensures that $w \leq 0$ on $X_0$. The last constraint combined with the objective function allow to ``choose'' $w$ as close as possible to $1$ on $X \setminus X_0$. In general the supremum in (\ref{eq:ROAopt}) is not attained, but any candidate solution $w$, is such that its zero sublevel contains $X_0$ and remains close to it in a certain sense. Indeed it what shown in \cite{henrion2014convex} that the supremum in (\ref{eq:ROAopt}) is equal to the volume of $X_0$ and this quantity can be approximated by hierarchies of semidefinite approximations which we describe in Section \ref{sec:convergence}.


\subsection{Optimal control}
In this section, we fix $t_0$ and $x_0$. As described in Section \ref{sec:bounds}, {\jb condition (\ref{eq:globalLower}) provides }a global lower bound on $v^*$. {\jb However, the} family of functions $\valFun$ which satisfy this condition is too {\jb large}. For example, if $l \geq 0$ and $l_T \geq 0$, then $\valFun = 0$ satisfies condition (\ref{eq:globalLower}) and does not provide much insight {\jb regarding} solutions of (\ref{eq:optimalControl}). Therefore, {\jb one} should design a way to choose lower bounds of specific interest. {\jb In} the {\jb (direct)} optimal control problem, {\jb one is} interested in the value $v^*(t_0, x_0)$. Hence an informal approach is to choose among all $\valFun$ that satisfy condition (\ref{eq:globalLower}) one for which $\valFun(t_0, x_0)$ is close to $v^*(t_0, x_0)$. Note that under condition (\ref{eq:globalLower}) we {\jb already} have $\valFun(t_0, x_0) \leq v^*(t_0, x_0)$ and hence it is sufficient to look for 
{\jb a function $\valFun$ such that $\valFun(t_0, x_0)$ is as large as possible}. This leads to the following optimization problem.
\begin{equation}
	\label{eq:ROCP}
	\begin{array}{ll}
		{\jb \displaystyle\sup_\valFun} & \valFun(t_0, x_0)\\
		\mathrm{s.t.}&0\,\leq\, l + \frac{\partial \valFun}{\partial t} + \frac{\partial \valFun}{\partial x} \cdot f\\
		&0 \,\leq \,l_T(\cdot) - \valFun(T,\cdot).
	\end{array}
\end{equation}
{\jb In general the supremum is not attained}. Furthermore, for most reasonable practical situations, the value of the problem is exactly $v^*(t_0, x_0)$, providing a valid conceptual solution to the optimal control problem.

{\jb At this point a remark is in order}. Solutions of problem (\ref{eq:ROCP}) allow to  approximate from below the value function $v^*$. In this {\jb respect} they provide solutions of (\ref{eq:optimalControl}) because of their relations {\jb to} $v^*$ which is the value of specific interest. However, this approach does not give access to an optimal trajectory which achieves this optimal value. Indeed, without further assumptions, the existence of such an optimal trajectory is not guaranteed. In order to compute optimal trajectories, further conditions are required in combination with additional methods to search for optimal trajectories. When such a method is available, it is always possible to combine it with solutions of (\ref{eq:ROCP}) by using condition (\ref{eq:localUpper}) and Proposition \ref{prop:localUpper} to certify the {\jb sub-optimality} of the computed trajectory.

\subsection{Inverse optimal control}

{\jb In inverse optimal control the situation} is somewhat reversed compared to direct optimal control. The Lagrangian is unknown
{\jb but we are given a set of trajectories that should be optimal with respect to the unknown Lagrangian}. {\jb So the goal} is to find a Lagrangian for which the given trajectories {\jb are} optimal. {\jb In Figure \ref{fig:directIllustr}, we display an informal description of this problem and its relation with the direct optimal control problem in the framework of positivity {\jb certificates}}. Briefly, the main goal is to infer a cost function {\jb (a Lagrangian) which can generate} a set of given {\jb trajectories} through an optimality process. The applications of this are twofold:
\begin{itemize}
	\item Provide {\jb a tool} for applications {\jb in which one assumes the existence of an optimality process behind decisions}.
	\item Provide {\jb a modeling tool} which could allow to summarize and reproduce {\jb the} behaviour of observed systems. 
\end{itemize}
\begin{figure}
	\begin{center}
		\begin{tikzpicture}[
		  every matrix/.style={ampersand replacement=\&,column sep=1cm,row sep=.1cm},
		  oneBox/.style={draw,thick,rounded corners,inner sep=.1cm,align=left},
		  twoBox/.style={align=left},
		  to/.style={->,>=stealth',shorten >=1pt,semithick,font=\sffamily\footnotesize}]
		
		  \matrix{
				\node[twoBox] (box01) {\underline{\textbf{System description}: $f, X, U, X_T$}};\&\\
				\&\node[twoBox] (box01) {\flushleft{\textbf{Direct control}:}};\\
				\node[oneBox] (box1) {\textbf{Input:}\\Lagrangian $l$};
				\& \node[oneBox] (box2) {\textbf{Positivity:}\\
																			\texttt{Data:} $l$\\
																			\texttt{Unknown:} $x(t), \valFun$
		
							};
				\& \node[oneBox] (box3) {\textbf{Output:}\\Optimal\\trajectory: $x(t)$};\\
				\&\\
				\&\\
				\&\node[twoBox] (box01) {\textbf{Inverse control}:};\\
				\node[oneBox] (box21) {\textbf{Input:}\\Controlled\\trajectories: $x(t)$};
				\& \node[oneBox] (box22) {\textbf{Positivity:}\\
																			\texttt{Data:} $x(t)$\\
																			\texttt{Unknown:} $l, \valFun$
		
							};
					\& \node[oneBox] (box23) {\textbf{Output:}\\Lagrangian $l$};\\
		  };
		
		  \draw[to] (box1) -- (box2);
		  \draw[to] (box2) -- (box3);
		  \draw[to] (box21) -- (box22);
		  \draw[to] (box22) -- (box23);
		\end{tikzpicture}
	\end{center}
	\caption{\label{fig:directIllustr} Direct optimal and inverse optimal control flow chart. {\jb The dynamical system is described 
	through the dynamics $f$, the state constraint set $X$, control constraint set $U$ and terminal state constraint set $X_T$ which are all given}. We emphasize that the Lagrangian and the trajectories have symmetric roles for the direct and inverse problems. {\jb In particular,}  the output of the inverse problem is a Lagrangian.}
\end{figure}
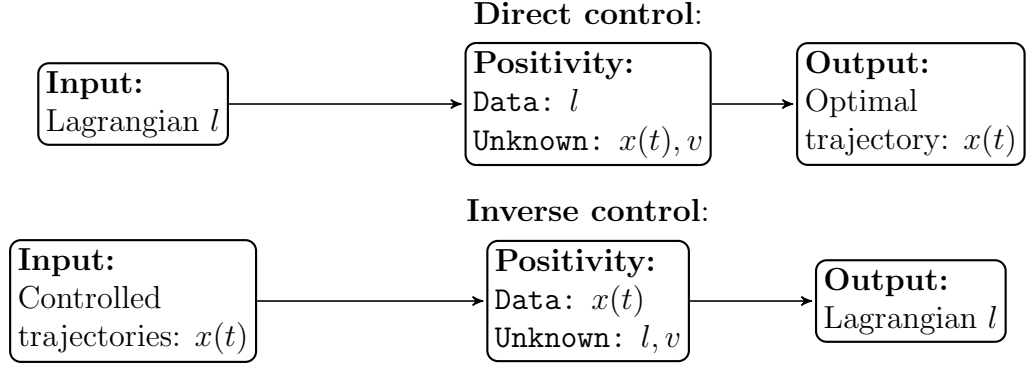

{\jb In the} rest of this section, we fix an admissible trajectory $(x,u)$ starting from $x_0 \in X_0$ at time $0$. We suppose that the state trajectory $x$ as well as the control {\jb trajectory} $u$ are given and we look for candidate Lagrangians. The whole methodology {\jb naturally extends to an arbitrary number of trajectories. Actually, the higher the number of trajectories, the better and the more (physically) meaningful is the characterization
of the candidate Lagrangian that we are looking for}. {\jb However for clarity of exposition, the approach is
better understood when we consider a single given trajectory}.

In order to provide a solution to the inverse problem, we combine conditions (\ref{eq:globalLower}) and (\ref{eq:localUpper}). The relevance of doing this comes from Proposition \ref{prop:localUpper} which provides a {\jb sub-optimality} certificate. In addition, we enforce $l_T= 0$ in order to simplify the problem and avoid the multiplication of non identifiabilities. Among all potential certificates we look for the one that provides the smallest sub-optimality gap as described in Proposition \ref{prop:localUpper}. This leads to the following optimization problem.

\begin{equation}
	\label{eq:IOCP}
	\begin{array}{llll}
		{\jb \displaystyle\inf_{\epsilon, l,\valFun}} & \epsilon&&\\
		\mathrm{s.t.}&0&\leq& l + \frac{\partial \valFun}{\partial t} + \frac{\partial \valFun}{\partial x} \cdot f\\
		&0 &\leq& l_T(\cdot) - \valFun(T,\cdot)\\
		& \frac{\epsilon}{2} &\geq&l(x(t),u(t)) + \frac{\partial \valFun}{\partial t}(t, x(t)) + \frac{\partial \valFun}{\partial x}(t, x(t)) \cdot f(x(t),u(t))\; \forall t \in [0, 1]\\ 
		&\frac{\epsilon}{2}&\geq&l_T(x(T)) - \valFun(1,x(1)). 
	\end{array}
\end{equation}
{\jb By Proposition \ref{prop:localUpper}, if $l$ is a Lagrangian part of a feasible solution $(\epsilon,l,\valFun)$
for problem (\ref{eq:IOCP}), then the trajectory $(x,u)$ is $\epsilon$-sub-optimal
for problem (\ref{eq:optimalControl}) with Lagrangian $l$. In other words, every feasible solution $(\epsilon,l,\valFun)$ of (\ref{eq:IOCP}) provides us with
an $\epsilon$-sub-optimality certificate for the trajectory $(x, u)$.}

{\jb However, this is not sufficient}. Indeed, problem (\ref{eq:IOCP}) {\jb always admits the trivial solution $(0,0,0)$ and it turns out that this solution is also valid from a formal point of view.} Indeed, {\jb every} admissible trajectory is optimal for the trivial {\jb null} Lagrangian, and therefore, from the point of view of inverse optimality, {\jb the null Lagrangian is a valid (but not satisfactory) solution. To avoid trivial Lagrangians, 
additional constraints on $l$ are needed}. We will settle upon problem (\ref{eq:IOCP}) as it highlights the main mechanism behind positivity in inverse optimal control and invite the reader to see \cite{pauwels2014inverse,pauwels2014linear} for further discussions and more details about application in practical situations.

\section{Duality, Hamilton-Jacobi-Bellman, SOS reinforcement and convergence}
\label{sec:convergence}
The results presented so far without much context elements are related to principles which have a long history in optimal control theory. 
{\jb In this section we mention a few of them and we also comment on how to use these results in practical contexts through the SOS hierarchy}.

\subsection{Occupation measures}
\label{sec:conicDuality}
{\jb The constraints imposed in (\ref{eq:globalLower}) have a {\it conic flavor} as they combine} linear operators and positivity constraints. The {\jb space of continuous functions that are nonnegative on a given set form a convex cone}. This cone admits a {\jb (convex) dual cone}, see \cite[Chapter IV]{alexander2002course} for a description of conic duality in Banach spaces. Representation results of Riesz type ensure that this dual cone can be identified with that of nonnegative measures {\jb on} the same set. {\jb As is classical for duality in convex optimization:
	\begin{itemize}
		\item To the inequality constraints appearing in the conic optimization problem (\ref{eq:globalLower}) are associated nonnegative {\it dual variables} in the dual conic optimization problem, and
		\item to the variables appearing in (\ref{eq:globalLower}) are associated constraints on these dual variables.
	\end{itemize}
The constraints in the dual problem describe a transport equation satisfied by the dual variables, more precisely the transport along the flow
followed by admissible trajectories in Definition \ref{def:admissibleTraj}. These dual variables are called {\jb ``occupation measures",}
see e.g. \cite{vinter1993convex} for an accurate description.} 

{\jb In other words,} the dual counterpart of condition (\ref{eq:globalLower}) allows to {\jb formally} work with {\jb {\it generalized trajectories}} instead of classical ones. {\jb Whence} the name ``relaxation'' for this approach. {\jb Equivalently,} one speaks of ``weak formulation'' of the optimal control problem (\ref{eq:optimalControl}) because the differential equation is replaced by a weaker constraint
{\jb (the transport equation for occupation measures)}. {\jb One} main benefit of working with weak formulations is that the question of attaining the infimum {\jb is solved, at least from a theoretical point of view, under weak conditions, e.g. compactness of the sets $X$ and $X_T$). However, the relaxed problem is not equivalent to the original problem, and its optimal value may be smaller. But for} most reasonable practical situations, there is no relaxation gap and {\jb the optimal values} of both problems are the same \cite{vinter1993convex,vinter1978equivalence}. Although the use of occupation measures is much less popular than classical differential equations in the engineering community, it is classical in Markov processes and ergodic dynamical systems. Furthermore, understanding this dual aspects is crucial in the framework of positivity constraints that we describe. 

\subsection{Hamilton-Jacobi-Bellman equation}
Conditions (\ref{eq:globalLower}) and (\ref{eq:localUpper}) have the same {\jb flavor and structure} as the well known Hamilton-Jacobi-Bellman (HJB) sufficient optimality conditions (see e.g. exposition in \cite{athans1966optimal}). In fact, if we combine (\ref{eq:globalLower}) with (\ref{eq:localUpper}) with $\epsilon = 0$, we recover exactly the same condition. This {\jb provides} a certificate of optimality {\jb for} a given trajectory. However, this condition is {\jb not} necessary. {\jb Indeed if the value function $v^*$ is not smooth 
(which is the case in most practical situations) then 
it is not possible to fulfill this condition in the classical sense.}
{\jb Whence} the use of a relaxed condition involving {\jb $\epsilon>0$} that measures how far we are from the true optimality condition. Another possible workaround is the use of the elegant {\jb {\it viscosity solution}} concept to define {\jb ``solutions"} of HJB equation \cite{bardi2008optimal}. This involves a lot more {\jb sophisticated} mathematical machinery, far beyond the scope of this paper.

\subsection{SOS reinforcement}
\label{sec:sosReinforcement}
Finally, conditions (\ref{eq:globalLower}) and (\ref{eq:localUpper}) are actually positivity constraints for functions. Moreover, all the examples presented in Section \ref{sec:applications} consist of combining these constraints with additional constraints of the same type and linear objective functions. {\jb In full generality this} type of constraints is not amenable to practical computation. In order to be able to use the results of this paper to actually solve control problems, involving 
{\jb some practical ``algorithm"}, we need to enforce more structure on the objects we manipulate. A {\jb now} widespread approach is to work with the following assumption.
\begin{assumption}\label{ass:algebra}
	The dynamics $f$, the Lagrangian $l$ and the terminal cost $l_T$ are polynomials. Constraints set $X$, $U$ and $X_T$ are {\jb compact basic semi-algebraic sets}.
\end{assumption}
Recall that a {\jb closed basic semi-algebraic} set $G$ can is defined by inequalities involving a finite number of polynomials $g_1, \ldots, g_q \in \RR\left[ X \right]$:
\begin{align}\label{eq:setG}
	G = \left\{ x:\; g_i(x) \geq 0,\,i=1, \ldots, q \right\}.
\end{align}
{\jb Given a family of sum-of-squares (SOS) polynomials $p_0, p_1, \ldots, p_q$, hence nonnegative}, it is direct to check that the following {\jb polynomial $P$ {\it in Putinar form}}
\begin{align}\label{eq:Putinar}
	P = p_0  + \sum_{i=1}^q p_i g_i,
\end{align}
{\jb is nonnegative on $G$}. Checking {\jb whether a given polynomial is a SOS reduces to solving a semidefinite program} and is thus amenable to {\jb efficient practical computation} \cite{vandenberghe1996semidefinite}. {\jb Actually dedicated software tools exist}
\cite{lofberg2009pre}. {\jb Hence under Assumption \ref{ass:algebra}, if $\valFun$ is a polynomial then Condition \ref{eq:globalLower} can be enforced 
by semidefinite constraints.} This is of course an approximation and in fact {\jb the SOS constraints (\ref{eq:setG})} are stronger than the {\jb original} positivity constraints, {\jb whence} the name {\jb ``reinforcement"}. But a counterpart of this approximation is that it is amenable to practical computation on moderate size problems which is not the case for general positivity constraints.

{\jb Conic convex} duality also holds for {\jb semidefinite programs}. In the {\jb present} context of control, the {\jb dual variables associated with the} SOS reinforcement of condition (\ref{eq:globalLower}) {\jb are ``moments" of the occupation measures} discussed 
in Section \ref{sec:conicDuality}. Hence the SOS approximation actually bears the name \textit{moment-SOS} approximation, see \cite{lasserre2010} for a comprehensive treatment.

\subsection{Convergence}
The positivity certificate in equation (\ref{eq:Putinar}) {\jb describes} a family of nonnegative polynomials over {\jb the set $G$} involving a family of SOS polynomials {\jb $\left\{p_i  \right\}_{i=0}^q$}. By increasing the degree {\jb allowed for these SOS polynomials $p_i$, one} provides a hierarchy of increasing families of {\jb polynomials nonnegative on $G$}. A relevant {\jb issue is:

	{\it What happens when we let the degrees of the SOS polynomials $\left\{p_i  \right\}_{i=0}^q$ defining this hierarchy goes to infinity?}} 

This {\jb issue is related to the question of the representation of nonnegative polynomials on
compact basic semi-algebraic sets}. {\jb Fortunately, powerful results from real algebraic geometry state that it is enough to work with certificates of the form of (\ref{eq:Putinar}) \cite{putinar1993positive}}. This usually translates in global convergence results: replacing nonnegativity constraint in condition (\ref{eq:globalLower}) by their SOS reinforcement and letting the degree of the SOS polynomials go to infinity is, in some sense, equivalent to the intractable constraints in condition (\ref{eq:globalLower}). {\jb Applications} of sufficient conditions to represent positive polynomials {\jb date} back to \cite{lasserre2001global} {\jb in static optimization and to \cite{lasserre2008nonlinear} in optimal control; see also \cite{lasserre2010,chesi2010lmi} for a more recent overview}. This methodology can be used for all the control problems described in Section \ref{sec:applications} to provide converging hierarchies of semidefinite approximations \cite{lasserre2008nonlinear,henrion2014convex,pauwels2014linear}, see also \cite{henrion2014optim} for a detailed overview.

\section{Numerical illustration}
\begin{figure}[t]
	\begin{picture}(140,100)

		\put(0,-45){\includegraphics[width=90mm]{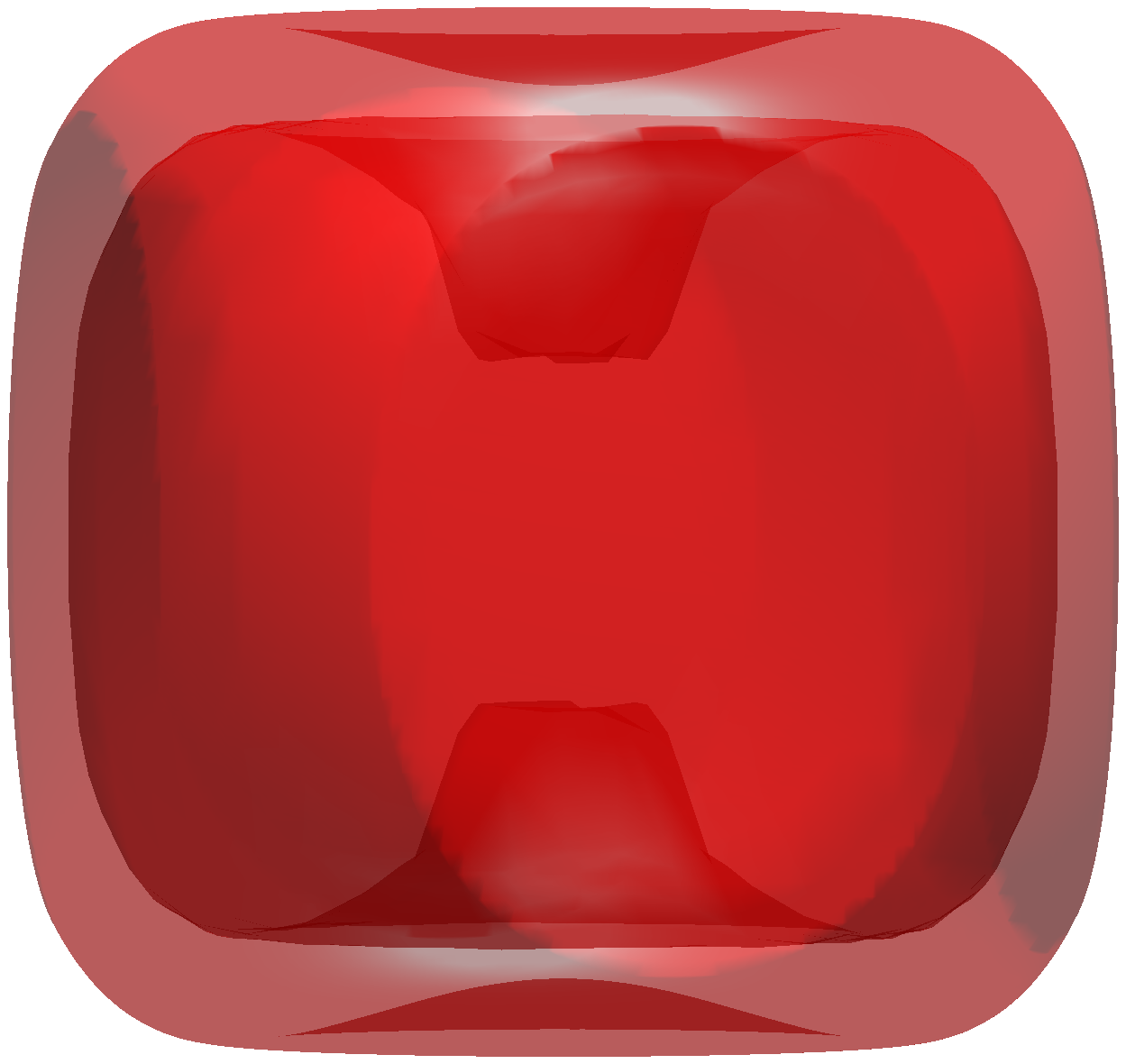}} 
		\put(210,-45){\includegraphics[width=90mm]{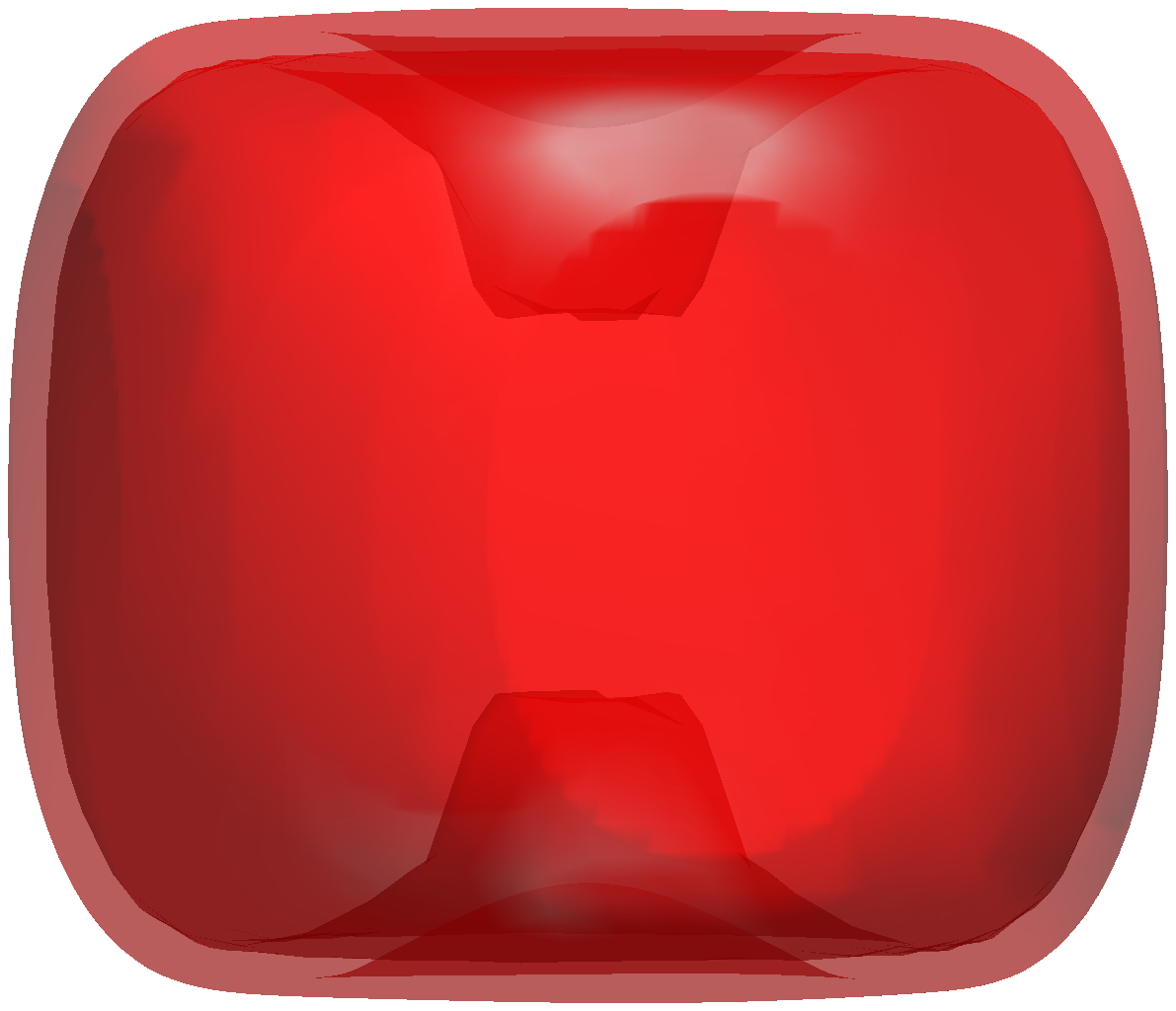}}

		\put(123,120){\footnotesize $d = 6$}
		\put(330,120){\footnotesize $d=10$}

	\end{picture}
	\caption{Sublevel set outer approximations (light red, larger) to the region of attraction $X_0$ (dark red, smaller) for different degrees of the SOS reinforcement. This was originally published in \cite{henrion2014convex}.}
	\label{fig:Brockett}
\end{figure}
In this section we briefly describe numerical results obtained when applying the SOS reinforcement techniques of Section \ref{sec:sosReinforcement} to the abstract optimization problems of Section \ref{sec:applications}. We choose a simple but non trivial nonlinear system: Brockett's integrator. This system is of importance in humanoid robotics since it is equivalent to the nonholonomic Dubbins vehicle \cite{soueres1996shortest}, a model of human walking \cite{arechavaleta2008optimality}, up to a change of variable \cite{devon2007kinematic}. The numerical results presented here relate to free terminal time optimal control problem, which is different from the fixed terminal time setting considered in this work. They still illustrate most important aspects of these simulations. Indeed, most of the ideas presented in Sections \ref{sec:bounds} and \ref{sec:applications} have direct equivalent in the free terminal time setting. In a nutshell, the terminal time in (\ref{eq:optimalControl}) is not fixed to be $1$ but is a decision variable of the problem. In this case, the value function $v^*$ as well as its lower approximations $v$ can be chosen to be independent of time. The numerical examples of this section were originally presented in \cite{henrion2014convex,lasserre2008nonlinear,pauwels2014inverse,pauwels2014linear}. All these examples were computed by combining the abstract infinite dimensional optimization problems of Section \ref{sec:applications} with the SOS reinforcement techniques of Section \ref{sec:sosReinforcement}.
\subsection{Brockett's integrator}
Brockett's integrator is a 3-dimensional nonlinear system with two dimensional control. We set $X = \left\{ x \in \RR^3: \|x\|_\infty \leq1 \right\}$, $U = \left\{ u \in \RR^2: \|u\|_2 \leq 1 \right\}$ and we let $X_T$ be the origin in $\RR^3$. The dynamics of the system are given by
\begin{align}
	\label{eq:brockett}
	f(x, u) = \left(
	\begin{array}{c}
		u_1\\
		u_2\\
		u_1 x_2 - u_2 x_1
	\end{array}
	\right),
\end{align}
where the subscripts denote the corresponding coordinates. All the following examples are related to the minimum time to reach the origin under the previous dynamical constraints. The value function of this problem is known and described in \cite[Theorem 1.36 and 1.41]{beals2000hamilton} and the corresponding optimal control is computed in \cite[Corollary 1]{prieur2005robust}. In what follows, $T(x)$ denotes the optimal time to reach the origin, starting at initial state $x$ under the dynamical constraints (\ref{eq:brockett}).

\begin{table}[t]
	\caption{Brockett's integrator, comparison of exact optimal time and SOS reinforcement. This was originally published in \cite{lasserre2008nonlinear}.}
	\label{table_example3}
	\begin{center}
		\begin{tabular}{||c|c|c||c|c|c||}
			\hline
			\multicolumn{3}{||c||}{SOS reinforcement.}&\multicolumn{3}{|c|}{Optimal time.}\\
			\hline
			1.7979&2.3614&3.2004&1.8257 & 2.3636&3.2091\\
			\hline
		 	2.3691 & 2.6780&3.3341&2.5231 & 2.6856 & 3.3426\\
		  \hline
			2.8875&3.0654 & 3.5337&3.1895&3.1008 & 3.5456 \\
			\hline
		\end{tabular}
	\end{center}
\end{table}
\subsection{Region of attraction}
For this application, the final time is set to $1$ and initial time is set to $0$. The region of attraction described in Section \ref{sec:applications} is the set $X_0$ of initial states for which there exists a feasible trajectory reaching the origin in time less or equal to $1$. In other words, it consists of the set of initial states $x$, for which $T(x) \leq 1$. This quantity is computable explicitly \cite{beals2000hamilton}. Combining the formulation in equation (\ref{eq:ROAopt}) with the SOS reinforcement technique described in Section \ref{sec:sosReinforcement}, we get sublevel sets which are outer approximations of $X_0$. This is represented in Figure \ref{fig:Brockett} which compares the true region of attraction and its outer approximation in $\RR^3$.

\subsection{Minimum time direct optimal control}
For the direct optimal control problem, we are interested in the value of the optimal time $T(x)$. Following \cite{lasserre2008nonlinear}, we combine the formulation in (\ref{eq:ROCP}) with the SOS reinforcement technique described in Section \ref{sec:sosReinforcement}. As a result, we get a lower approximation of the optimal time $T(x)$. This is illustrated in Table \ref{table_example3} for different initial conditions: $x_1 = 1$ and $x_i \in \left\{ 1,2,3 \right\}$, $i=2,3$. As expected, we obtain lower bounds on the optimal time which is known analytically. For these examples, the approximation is reasonably accurate.

\subsection{Inverse optimal control}
\begin{figure}[t]
	\centering
	\includegraphics[width=0.6\textwidth]{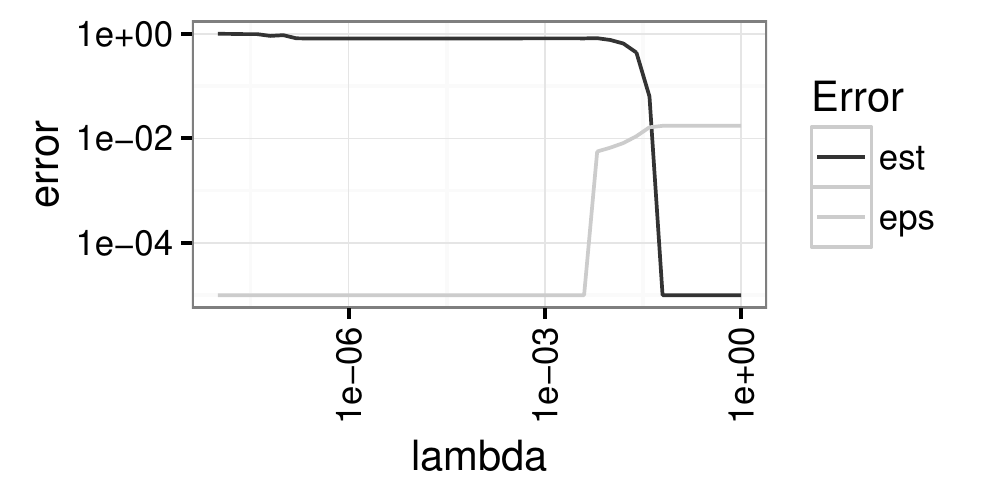}
	\caption{Error versus a regularization parameter $\lambda$ for the inverse optimal control problem. Estimation error (est) is a measure of distance between a candidate solution computed with (\ref{eq:IOCP}) and the constant Lagrangian (normalized between $0$ and $1$). Epsilon error (eps) corresponds to the value of $\epsilon$ in (\ref{eq:IOCP}). The Lagrangian is looked for among all 3-variate polynomials of degree up to $4$. These results were originally presented in \cite{pauwels2014linear}.}
	\label{fig:deterministic}
\end{figure}

In this example, we are interested in recovering the minimum time Lagrangian (constant) from optimal trajectories which reach the origin in minimal time under dynamical constraints (\ref{eq:brockett}). These trajectories can be computed analytically \cite{prieur2005robust}. As outlined in Figure \ref{fig:directIllustr}, the trajectories constitute an input of the inverse problem and the output is a Lagrangian function. In order to find this function, we follow the work of \cite{pauwels2014inverse,pauwels2014linear} which combines the abstract problem in (\ref{eq:IOCP}) with SOS reinforcement techniques described in Section \ref{sec:sosReinforcement} and additional constraints. We emphasize that the problem of Lagrangian identification is much less well posed than the direct optimal control problem and that accuracy of solutions highly depend on prior information about expected Lagrangians. In \cite{pauwels2014inverse,pauwels2014linear}, it is shown that the success of such a procedure requires careful normalization and prior knowledge enforcement (sparsity through a regularization term). We do not describe the details of the procedure here and refer to \cite{pauwels2014inverse,pauwels2014linear} for more details. This formulation includes a regularization parameter denoted by $\lambda$. Figure \ref{fig:deterministic} presents measures of inverse optimality accuracy, (the value of $\epsilon$ in (\ref{eq:IOCP})), and estimation accuracy (a distance to the constant Lagrangian, the true Lagrangian of minimum time optimal control), for various values of this parameters. The input is made of optimal time trajectories and Figure \ref{fig:deterministic} illustrates that the original Lagrangian can be recovered with a reasonable inverse optimality accuracy for some values of the regularization parameter close to 1.

\section*{Acknowledgements}
This work was partly supported by project 16-19526S of the {\it Grant Agency of the Czech Republic}, project ERC-ADG TAMING 666981,ERC-Advanced Grant of the {\it European Research Council} and grant number FA9550-15-1-0500 from the {\it Air Force Office of Scientific Research, Air Force Material Command}.

\end{document}